\newtheorem{theorem}{Theorem}[section]
\newtheorem{lemma}[theorem]{Lemma}
\theoremstyle{definition}
\newtheorem{definition}[theorem]{Definition}
\newtheorem{notation}[theorem]{Notation}
\newtheorem{example}[theorem]{Example}
\newtheorem{remark}[theorem]{Remark}
\title{The fibration sequences of the lattice path operad}
\author[Florian De Leger]{Florian De Leger}
\thanks{The author was supported by RVO:67985840 and Praemium Academiae of Martin Markl}
\begin{document}
	
\begin{abstract}
	We introduce a notion of an \emph{operad of complexity $m$}, for $m \geq 1$. Operads of complexity $1$ are monoids in the category of $\mathbb{N}$-indexed collections, with monoidal product given by the Day convolution, and operads of complexity $2$ are non-symmetric operads. In general, we prove that the operad for operads of complexity $m$ is a suboperad of the $m$-th stage filtration of the lattice path operad introduced by Batanin and Berger \cite{BataninBergerLattice}. Finally, we exhibit fibration sequences involving this new notion, extending the results of Turchin \cite{turchin} and Dwyer-Hess \cite{dwyerhess}.
\end{abstract}
	
\maketitle

\tableofcontents

\section{Introduction}

Batanin and Berger introduced the \emph{lattice path operad} \cite[Definition 2.1]{BataninBergerLattice} in order to give a conceptual proof of Deligne's conjecture. They exhibited the existence of a \emph{filtration by complexity} of this coloured operad. For $m=2$, they were able to characterise the $m$-th stage filtration as the operad for \emph{multiplicative non-symmetric operads}. For $m > 2$ however, algebras over the $m$-th stage filtration are not very well understood. In this paper, we will define a notion of an \emph{operad of complexity $m$} and prove that they are algebras for a suboperad of the $m$-th stage filtration.

Let us assume that $m \geq 1$ is fixed. Our notion of an operad of complexity $m$ (see Definition \ref{definitionoperadofcomplexitym}) is based on the following construction. Imagine that we have two strips of length $r$ and $s$ respectively. For an integer $z$, let $\lfloor \frac{z}{2} \rfloor$ be the greatest integer less than or equal to $\frac{z}{2}$, that is, $\lfloor \frac{z}{2} \rfloor$ is equal to $\frac{z}{2}$ if $z$ is even and $\frac{z-1}{2}$ if $z$ is odd. We can make $\lfloor \frac{m}{2} \rfloor$ cuts of length one on the first strip and $\lfloor \frac{m-1}{2} \rfloor$ on the second one, so that $m-1$ cuts in total have been picked. We obtain $m+1$ smaller strips of total length $r+s-m+1$ that we can combine to get a new strip, alternating between the smaller strips coming from the first one and the second one that we had at the beginning. Here is an illustration of this construction when $m=6$:
\[
	\begin{tikzpicture}[scale=.5]
		\draw[fill,red] (0,0) -- (0,1) -- (5,1) -- (5,0) -- (0,0);
		
		\draw (1.2,0) -- (1.2,1);
		\draw (2,0) -- (2,1);
		\draw (3.7,0) -- (3.7,1);
		
		\begin{scope}[shift={(6.5,0)}]
		\draw[fill,green] (0,0) -- (0,1) -- (5,1) -- (5,0) -- (0,0);
		\draw (1.4,0) -- (1.4,1);
		\draw (3.8,0) -- (3.8,1);
		\end{scope}
		
		\draw (13,.5) node{$\to$};
		
		\begin{scope}[shift={(14.5,0)}]
		\draw[fill,red] (0,0) -- (0,1) -- (1.2,1) -- (1.2,0) -- (0,0);
		\draw[fill,green] (1.2,0) -- (1.2,1) -- (2.6,1) -- (2.6,0) -- (1.2,0);
		\draw[fill,red] (2.6,0) -- (2.6,1) -- (3.4,1) -- (3.4,0) -- (2.6,0);
		\draw[fill,green] (3.4,0) -- (3.4,1) -- (5.8,1) -- (5.8,0) -- (3.4,0);
		\draw[fill,red] (5.8,0) -- (5.8,1) -- (7.5,1) -- (7.5,0) -- (5.8,0);
		\draw[fill,green] (7.5,0) -- (7.5,1) -- (8.7,1) -- (8.7,0) -- (7.5,0);
		\draw[fill,red] (8.7,0) -- (8.7,1) -- (10,1) -- (10,0) -- (8.7,0);
		\end{scope}
	\end{tikzpicture}
\]
An \emph{operad of complexity $m$} in a symmetric monoidal category $(\mathcal{E},\otimes,e)$ will be defined as a collection $\mathcal{A}(n)$ of objects in $\mathcal{E}$, for $n \geq 0$, together with a map $\eta: e \to \mathcal{A}(m-1)$ which should be regarded as the unit of the operad. The multiplication of the operad is induced by the construction described above. That is, for $r,s \geq 0$, $i$ an $\lfloor \frac{m}{2} \rfloor$-tuple of elements in $\{1,\ldots,r\}$ and $j$ an $\lfloor \frac{m-1}{2} \rfloor$-tuple of elements in $\{1,\ldots,s\}$, we define a map
\[
\tensor[_i]{\circ}{_j} : \mathcal{A}(r) \otimes \mathcal{A}(s) \to \mathcal{A}(r+s-m+1).
\]
For $m=1$, the construction is the gluing of the two strips. Such concatenation induces the notion of a monoid in the category of $\mathbb{N}$-indexed collections with monoidal product given by the Day convolution
\[
	\mathcal{A} \otimes \mathcal{B}(n) = \coprod_{r+s=n} \mathcal{A}(r) \otimes \mathcal{B}(s).
\]
For $m=2$, the construction consists of making one cut on the first strip and putting in its place the second strip. This induces the notion of a non-symmetric operad as defined by Markl \cite[Definition 11]{markl2008operads}.

Together with the notion of operad of complexity $m$, we will also introduce the notion of a bimodule over $\mathcal{A}$ and $\mathcal{B}$, for any pair $(\mathcal{A},\mathcal{B})$ of operads of complexity $m$. Unfortunately, this notion of bimodules seems hard to describe explicitly, so we will describe the operad which governs them instead. Our goal will be to extend the result of Turchin \cite{turchin} and Dwyer-Hess \cite{dwyerhess}. Their result says that for a multiplicative non-symmetric operad $\mathcal{O}$, there are fibration sequences
\begin{equation}\label{equationfirstfibrationsequence}
	\Omega \mathrm{Map}_{\mathrm{NOp}}(\mathcal{A}ss,u^*(\mathcal{O})) \to \mathrm{Map}_{\mathrm{Bimod}}(\mathcal{A}ss,v^*(\mathcal{O})) \to \mathcal{O}_1
\end{equation}
and
\begin{equation}\label{equationsecondfibrationsequence}
	\Omega \mathrm{Map}_{\mathrm{Bimod}}(\mathcal{A}ss,v^*(\mathcal{O})) \to \mathrm{Map}_{\mathrm{IBimod}}(\mathcal{A}ss,w^*(\mathcal{O})) \to \mathcal{O}_0,
\end{equation}
where $\Omega$ is the \emph{loop space functor} and $\mathrm{Map}_\mathbb{C}(-,-)$ is the \emph{homotopy mapping space} in the category $\mathbb{C}$. $\mathrm{NOp}$, $\mathrm{Bimod}$ and $\mathrm{IBimod}$ are respectively the categories of non-symmetric operads, bimodules over $\mathcal{A}ss$ and infinitesimal bimodules over $\mathcal{A}ss$, where $\mathcal{A}ss$ is the terminal non-symmetric operad. Finally, $u^*$, $v^*$ and $w^*$ are the appropriate forgetful functors. We conjecture that for an algebra $\mathcal{O}$ over the $m$-th stage filtration of the lattice path operad, there is a fibration sequence
\[
	\Omega \mathrm{Map}_{\mathrm{Operad}(m)}(\zeta,u^*(\mathcal{O})) \to \mathrm{Map}_{\mathrm{Bimod}(m)}(\zeta,v^*(\mathcal{O})) \to \mathcal{O}_{m-1},
\]
where $\mathrm{Operad}(m)$ is the category of operads of complexity $m$, $\zeta$ is the terminal object in this category and $\mathrm{Bimod}(m)$ is the category of $\zeta-\zeta$-bimodules. Once again, $u^*$ and $v^*$ are the appropriate forgetful functors. We will actually prove the existence of this fibration sequence for $m \leq 3$ (see Theorem \ref{theorem}). The case $m=3$ is a new result. In the case $m=2$, we recover the fibration sequence \ref{equationfirstfibrationsequence} and in the case $m=1$, we get a fibration sequence very similar to \ref{equationsecondfibrationsequence}, $\mathcal{A}ss$-bimodules being equivalent to operads of complexity $1$ equipped with a compatible structure of a covariant presheaf over the active part of the simplex category $\Delta$. Unfortunately, for $m>3$, the combinatorics seem more involved and we might need to add extra assumptions on $\mathcal{O}$. This could be the topic of a future work.

Let us now describe the plan of this paper. In Section \ref{sectionpreliminaries}, we will recall the definition of the lattice path operad, together with its filtration by complexity. We will also recall the characterisation of the second stage filtration as the operad for multiplicative non-symmetric operads. In Section \ref{sectionoperadcomplexitym}, we will introduce the notion of operads of complexity $m$ and prove that they are algebras for a suboperad of the $m$-th stage filtration of the lattice path operad. In Section \ref{sectionbimodules}, we will introduce the notion of a bimodule, as well as pointed bimodules, over these operads of complexity $m$. Finally, in Section \ref{sectionfibrationsequences}, we will prove our fibration sequence theorem. As in \cite{batanindeleger}, the main ingredient is the cofinality of a map of polynomial monads (see Theorem \ref{theoremcofinality}).

\subsection*{Acknowledgement}

I want to thank Maro\v{s} Grego with whom I started working on this project, and Michael Batanin for our interesting email exchange on this topic.

\section{Preliminaries}\label{sectionpreliminaries}

\subsection{The lattice path operad}

Recall that a \emph{symmetric coloured operad} $\mathcal{O}$ (in $\mathrm{Set}$) is given by a set of \emph{colours} $I$ and for each integer $k \geq 0$ and $k+1$-tuples of colours $(i_1,\ldots,i_k;i)$, a set of \emph{operations} $\mathcal{O}(i_1,\ldots,i_k;i)$, where $i_1,\ldots,i_k$ are called the \emph{sources} and $i$ is called the \emph{target}, together with
\begin{itemize}
	\item for all $i \in I$, an element in $\mathcal{O}(i;i)$ called \emph{identity}
	\item \emph{multiplication} maps
	\[
		\mathcal{O}(i_1,\ldots,i_k;i) \times \mathcal{O}(i_{11},\ldots,i_{1l_1};i_1) \times \ldots \times \mathcal{O}(i_{k1},\ldots,i_{kl_k};i_k) \to \mathcal{O}(i_{11},\ldots,i_{kl_k};i)
	\]
	\item an action of the symmetric group $\Sigma_k$ on $\mathcal{O}(i_1,\ldots,i_k;i)$,
\end{itemize}
satisfying associativity, unit and equivariance axioms. 
%
%
%
For $n > 0$, we write $[n]$ for the category given by the poset $\{0 < \ldots < n\}$. The \emph{funny tensor product} $[m] \otimes [n]$ is the category freely generated by the $(m,n)$-grid. This grid has as vertices the pairs $(i,j)$ with $i \in [m]$ and $j \in [n]$ and an edge from $(i,j)$ to $(i',j')$ if $(i',j')=(i+1,j)$ or $(i',j')=(i,j+1)$. This funny tensor product $\otimes$ actually extends to a symmetric monoidal product on the category of small categories. Recall that a \emph{bipointed category} is a category with two distinguished points and a \emph{bipointed functor} is a functor which preserves those points. The category $[n]$ is bipointed by the pair $(0,n)$ and if $\mathcal{C}$ and $\mathcal{D}$ are two categories bipointed by $(c_0,c_1)$ and $(d_0,d_1)$ respectively, then $\mathcal{C} \otimes \mathcal{D}$ is bipointed by $((c_0,d_0),(c_1,d_1))$.

\begin{definition}
	The \emph{lattice path operad} $\mathcal{L}$ is the operad whose set of colours is the set $\mathbb{N}$ of non-negative integers and for a $k+1$-tuple of non-negative integers $(n_1,\ldots,n_k;n)$, the set of operations $\mathcal{L}(n_1,\ldots,n_k;n)$ is the set of bipointed functors
	\begin{equation}\label{equationbipointedfunctor}
		[n+1] \to [n_1+1] \otimes \ldots \otimes [n_k+1],
	\end{equation}
	the multiplication maps are induced by tensor and composition.
\end{definition}

\begin{example}
	The operations of $\mathcal{L}$ can be drawn as lattice paths. For example, the lattice path
	\[
	\begin{tikzpicture}
	\draw[ultra thin] (0,0) -- (0,2);
	\draw[ultra thin] (1,0) -- (1,2);
	\draw[ultra thin] (2,0) -- (2,2);
	\draw[ultra thin] (3,0) -- (3,2);
	\draw[ultra thin] (0,0) -- (3,0);
	\draw[ultra thin] (0,1) -- (3,1);
	\draw[ultra thin] (0,2) -- (3,2);
	\draw[line width=.7mm] (0,0) -- (2,0) -- (2,2) -- (3,2);
	\draw[fill=white] (1,0) circle (2pt);
	\draw[fill=white] (2,1) circle (2pt);
	\end{tikzpicture}
	\]
	represents one of the four bipointed functors $[3] \to [3] \otimes [2]$ sending $1$ to $(1,0)$ and $2$ to $(2,1)$. The lattice path pictured above can also be written as the string $1|12|21$. In general, an operation $x \in \mathcal{L}(n_1,\ldots,n_k;n)$ corresponds to a string containing exactly $n_i+1$ times the integer $i$, for $i=1,\ldots,k$, and exactly $n$ vertical bars. Under this correspondence, the identites are the strings $1|\ldots|1$.
\end{example}

\begin{remark}\label{remarkcompositionlatticepathoperad}
	When we regard the operations of the lattice path operad as strings, the composition is given by replacing each integer of the first string by an appropriate substring, then renumbering, as it was described in \cite[Section 2.2]{BataninBergerLattice}. Here is an example to illustrate:
	\[
		\begin{tikzpicture}
			\draw (0,0) node{$1|12|21$};
			\draw (2,0) node{$213|13|23$};
			\draw (4.2,0) node{$122|211$};
			\draw (5.8,0) node{$\mapsto$};
			\draw (8.1,0) node{$213|13455|54423$};

			\draw (1.28,-.12) -- (1.28,-.17) -- (1.8,-.17) -- (1.8,-.12);
			\draw[->] (1.54,-.17) -- (1.54,-.27) -- (-.46,-.27) -- (-.46,-.12);

			\draw (1.92,-.12) -- (1.92,-.17) -- (2.25,-.17) -- (2.25,-.12);
			\draw[->] (2.085,-.17) -- (2.085,-.37) -- (-.18,-.37) -- (-.18,-.12);

			\draw (2.37,-.12) -- (2.37,-.17) -- (2.7,-.17) -- (2.7,-.12);
			\draw[->] (2.535,-.17) -- (2.535,-.47) -- (.445,-.47) -- (.445,-.12);
			
			\draw (3.65,.16) -- (3.65,.21) -- (4.14,.21) -- (4.14,.16);
			\draw[->] (3.895,.21) -- (3.895,.31) -- (0,.31) -- (0,.19);
			
			\draw (4.25,.16) -- (4.25,.21) -- (4.74,.21) -- (4.74,.16);
			\draw[->] (4.495,.21) -- (4.495,.41) -- (.28,.41) -- (.28,.19);
		\end{tikzpicture}
	\]
\end{remark}


\subsection{Filtration by complexity}


For $1 \leq i < j \leq k$, there are canonical projection functors $[n_1+1] \otimes \ldots \otimes [n_k+1] \to [n_i+1] \otimes [n_j+1]$. Together with the unique functor $[1] \to [n+1]$, they induce maps
\[
	\phi_{ij}: \mathcal{L}(n_1,\ldots,n_k) \to \mathcal{L}(n_i,n_j;0).
\]
We recall the filtration by complexity of the lattice path operad \cite[Definition 2.10]{BataninBergerLattice}.

\begin{definition}
	The complexity index of $x \in \mathcal{L}(n_1,\ldots,n_k;n)$ is defined by
	\[
		c(x) = \max_{1 \leq i < j \leq k} c_{ij}(x),
	\]
	where $c_{ij}(x)$ is the number of corners in the lattice path $\phi_{ij}(x)$.
\end{definition}

\begin{definition}
	The $m$-th stage filtration $\mathcal{L}_m$ is the suboperad of $\mathcal{L}$ of operations of complexity index at most $m$.
\end{definition}

\subsection{Characterisation of the second stage filtration}

Recall that an algebra $\mathcal{A}$ over a symmetric coloured operad $\mathcal{O}$ can be defined in any symmetric monoidal category $(\mathcal{E},\otimes,e)$. It is given by an object $\mathcal{A}(i)$ for each colour $i$ of $\mathcal{O}$ together with a map
\[
	\mathcal{A}(i_1) \otimes \ldots \otimes \mathcal{A}(i_k) \to \mathcal{A}(i)
\]
for each operation in $\mathcal{O}(i_1,\ldots,i_k;i)$, satisfying associativity, unit and equivariance axioms. Also recall that a \emph{non-symmetric operad} is a one-coloured operad without the action of the symmetric group. We therefore get multiplication maps
\[
	\mathcal{A}(k) \otimes \mathcal{A}(l_1) \otimes \ldots \otimes \mathcal{A}(l_k) \to \mathcal{A}(l_1+\ldots+l_k),
\]
where $\mathcal{A}(n) := \mathcal{A}(\underbrace{*,\ldots,*}_n;*)$. It is called \emph{multiplicative} when it is equipped with an operad map from the non-symmetric operad $\mathcal{A}ss$ given by $\mathcal{A}ss(n) = e$ for all $n \geq 0$.
%

\begin{remark}\label{remarkcorrespondence}
	It was proved \cite[Proposition 2.14]{BataninBergerLattice} that algebras of $\mathcal{L}_2$ are multiplicative non-symmetric operads. The argument is that there is a one-to-one correspondence between the operations of $\mathcal{L}_2$ and planar trees with black and white vertices, with the extra condition that there are no pairs of adjacent black vertices and no unary black vertices, as illustrated in the following picture:
	\[
	\begin{tikzpicture}[scale=.7]
	\draw (0,-.4) -- (0,0) -- (-1,1) -- (-1.6,2);
	\draw (-1,1) -- (-.4,2);
	\draw (0,1) -- (0,0) -- (1,1) -- (1,2);
	\draw[fill=white] (-1,1) circle (2pt) node[left]{$1$};
	\draw[fill=white] (1,1) circle (2pt) node[right]{$3$};
	\draw[fill=white] (1,2) circle (2pt) node[right]{$2$};
	\draw[fill] (0,0) circle (2pt);
	
	\draw (4,1) node{$\longleftrightarrow$};
	
	\draw (7.4,1) node{$1|1|1|323$};
	\end{tikzpicture}
	\]
\end{remark}

\section{Operads of complexity $m$}\label{sectionoperadcomplexitym}


\subsection{Definition of the notion}

From now, we will assume that $m \geq 1$ is fixed.

\begin{notation}
	For $r,s \geq 0$, we will write
	\[
		r \vee s = r+s-m+1.
	\]
\end{notation}

For an integer $z$, let $\lfloor \frac{z}{2} \rfloor$ be the greatest integer less than or equal to $\frac{z}{2}$, that is, $\lfloor \frac{z}{2} \rfloor$ is equal to $\frac{z}{2}$ if $z$ is even and $\frac{z-1}{2}$ if $z$ is odd. The following definition will be needed to describe the associativity axioms for operads of complexity $m$.

\begin{definition}
	Let $r,s \geq 0$, $i$ an $\lfloor \frac{m}{2} \rfloor$-tuple of elements in $\{1,\ldots,r\}$, $j$ and $j'$ respectively $\lfloor \frac{m-1}{2} \rfloor$-tuple and $\lfloor \frac{m}{2} \rfloor$-tuple of elements in $\{1,\ldots,s\}$. We define the $\lfloor \frac{m}{2} \rfloor$-tuple $i'$ of elements in $\{1,\ldots,r \vee s\}$ as follows. For $\alpha$ an integer between $1$ and $\lfloor \frac{m}{2} \rfloor$, let $n$ be the number elements in $j$ which are less than the $\alpha$-th element of $j'$. We define
	\[
		i'_\alpha = i_{n+1}+j'_\alpha-2n+1,
	\]
	where $i_{n+1}$ is the $n+1$-th element in $i$ if $n < \lfloor \frac{m}{2} \rfloor$, $r$ otherwise.
	
	Here is a picture for $m=5$ to illustrate:
	\[
	\begin{tikzpicture}[scale=.5]
	\draw[fill,red] (0,0) -- (0,1) -- (5,1) -- (5,0) -- (0,0);
	
	\draw (1.2,1) -- (1.2,0) node[below]{$i_1$};
	\draw (3.7,1) -- (3.7,0) node[below]{$i_2$};
	
	\begin{scope}[shift={(6.5,0)}]
	\draw[fill,green] (0,0) -- (0,1) -- (5,1) -- (5,0) -- (0,0);
	\draw (.8,1) -- (.8,0) node[below]{$j_1$};
	\draw (2.3,1) -- (2.3,0) node[below]{$j'_1$};
	\draw (3.4,1) -- (3.4,0) node[below]{$j_2$};
	\draw (4.6,1) -- (4.6,0) node[below]{$j'_2$};
	\end{scope}
	
	\draw (13,.5) node{$\to$};
	
	\begin{scope}[shift={(14.5,0)}]
	\draw[fill,red] (0,0) -- (0,1) -- (1.2,1) -- (1.2,0) -- (0,0);
	\draw[fill,green] (1.2,0) -- (1.2,1) -- (2,1) -- (2,0) -- (1.2,0);
	\draw[fill,red] (2,0) -- (2,1) -- (4.5,1) -- (4.5,0) -- (2,0);
	\draw[fill,green] (4.5,0) -- (4.5,1) -- (7.1,1) -- (7.1,0) -- (3.4,0);
	\draw[fill,red] (7.1,0) -- (7.1,1) -- (8.4,1) -- (8.4,0) -- (7.1,0);
	\draw[fill,green] (8.4,0) -- (8.4,1) -- (10,1) -- (10,0) -- (8.4,0);
	\draw (6,1) -- (6,0) node[below]{$i'_1$};
	\draw (9.6,1) -- (9.6,0) node[below]{$i'_2$};
	\end{scope}
	\end{tikzpicture}
	\]
	The red strip represents the set $\{1,\ldots,r\}$ with $i$ given by the pair of elements $(i_1,i_2)$ in this set. The green strip represents the set $\{1,\ldots,s\}$ with $j$ and $j'$ given by the two pairs of elements $(j_1,j_2)$ and $(j'_1,j'_2)$ in this set. The strip on the right represents the set $\{1,\ldots,r \vee s\}$, whose elements are seen as elements of $\{1,\ldots,r\}$ minus the elements of $i$ or elements of $\{1,\ldots,s\}$ minus the elements of $j$. Then $i'_1$ and $i'_2$ are the elements corresponding to $j'_1$ and $j'_2$.
	
	Similarly, let $i$ an $\lfloor \frac{m}{2} \rfloor$-tuple and $i'$ an $\lfloor \frac{m}{2} \rfloor$-tuple (resp. $\lfloor \frac{m-1}{2} \rfloor$-tuple) of elements in $\{1,\ldots,r\}$, and $j$ an $\lfloor \frac{m-1}{2} \rfloor$-tuple of elements in $\{1,\ldots,s\}$. We define the $\lfloor \frac{m}{2} \rfloor$-tuple (resp. $\lfloor \frac{m-1}{2} \rfloor$-tuple) $j'$ of elements in $\{1,\ldots,r \vee s\}$ as follows. For $\alpha$ an integer between $1$ and $\lfloor \frac{m}{2} \rfloor$ (resp. $\lfloor \frac{m-1}{2} \rfloor$), let $n$ be the number of elements in $i$ which are less than the $\alpha$-th element of $i'$. We define
	\[
		j'_\alpha = j_n + i'_\alpha - 2n,
	\]
	where $j_n$ is the $n$-th element in $j_n$ if $1 \leq n \leq \lfloor \frac{m-1}{2} \rfloor$, $0$ if $n=0$ and $s$ if $n > \lfloor \frac{m-1}{2} \rfloor$.
\end{definition}

\begin{definition}\label{definitionoperadofcomplexitym}
	An \emph{operad $\mathcal{A}$ of complexity $m$} in a symmetric monoidal category $(\mathcal{E},\otimes,e)$ is given by a collection $\mathcal{A}(n)$ of objects in $\mathcal{E}$, for $n \geq 0$ together with
	\begin{itemize}
		\item a map
		\begin{equation}\label{equationunitmap}
			\eta: e \to A(m-1),
		\end{equation}
		
		\item for $r,s \geq 0$, $i$ an $\lfloor \frac{m}{2} \rfloor$-tuple of elements in $\{1,\ldots,r\}$ and $j$ an $\lfloor \frac{m-1}{2} \rfloor$-tuple of elements in $\{1,\ldots,s\}$, a map
		\begin{equation}\label{equationmultiplicationmap}
			\tensor[_i]{\circ}{_j} : \mathcal{A}(r) \otimes \mathcal{A}(s) \to \mathcal{A}(r \vee s)
		\end{equation}
	\end{itemize}
	such that
	\begin{itemize}
		\item if $i=\left(1,3,\ldots,2\lfloor \frac{m}{2} \rfloor-1\right)$, for any $s \geq 0$ and $j$ an $\lfloor \frac{m-1}{2} \rfloor$-tuple of elements in $\{1,\ldots,s\}$, the following composite is the identity
		\[
			 \mathcal{A}(s) \xrightarrow{\eta \otimes id} \mathcal{A}(m-1) \otimes \mathcal{A}(s) \xrightarrow{\tensor[_i]{\circ}{_j}} \mathcal{A}(s).
		\]
		
		\item if $j=\left(2,4,\ldots,2\lfloor \frac{m-1}{2} \rfloor\right)$, for any $r \geq 0$ and $i$ an $\lfloor \frac{m}{2} \rfloor$-tuple of elements in $\{1,\ldots,r\}$, the following composite is the identity
		\[
			\mathcal{A}(r) \xrightarrow{id \otimes \eta} \mathcal{A}(r) \otimes \mathcal{A}(m-1) \xrightarrow{\tensor[_i]{\circ}{_j}} \mathcal{A}(r).
		\]
		
		\item for $r,s,t \geq 0$, $i$ an $\lfloor \frac{m}{2} \rfloor$-tuple of elements in $\{1,\ldots,r\}$, $j$ and $j'$ respectively $\lfloor \frac{m}{2} \rfloor$-tuple and $\lfloor \frac{m-1}{2} \rfloor$-tuple of elements in $\{1,\ldots,s\}$, such that $j$ and $j'$ do not have any element in common, and $k$ an $\lfloor \frac{m-1}{2} \rfloor$-tuple of elements in $\{1,\ldots,t\}$, the following square commutes
		\[
			\xymatrix{
				\mathcal{A}(r) \otimes \mathcal{A}(s) \otimes \mathcal{A}(t) \ar[r]^-{id \otimes \tensor[_j]{\circ}{_k}} \ar[d]_{\tensor[_i]{\circ}{_{j'}} \otimes id} & \mathcal{A}(r) \otimes \mathcal{A}(s \vee t) \ar[d]^{\tensor[_i]{\circ}{_{k'}}} \\
				\mathcal{A}(r \vee s) \otimes \mathcal{A}(t) \ar[r]_{\tensor[_{i'}]{\circ}{_k}} & \mathcal{A}(r+s+t-2m+2)
			}
		\]
		
		\item if $m$ is even, for $r,s,t \geq 0$, $i$ and $i'$ two $\frac{m}{2}$-tuples of elements in $\{1,\ldots,r\}$, $j$ an $\frac{m}{2}-1$-tuple of elements in $\{1,\ldots,s\}$ and $k$ an $\frac{m}{2}-1$-tuple of elements in $\{1,\ldots,t\}$, the following diagram commutes
		\[
			\xymatrix{
				\mathcal{A}(r) \otimes \mathcal{A}(s) \otimes \mathcal{A}(t) \ar[r]^-{\tensor[_i]{\circ}{_j} \otimes id} \ar[d]_\simeq & \mathcal{A}(r \vee s) \otimes \mathcal{A}(t) \ar[dd]^{\tensor[_{j'}]{\circ}{_k}} \\
				\mathcal{A}(r) \otimes \mathcal{A}(t) \otimes \mathcal{A}(s) \ar[d]_{\tensor[_{i'}]{\circ}{_k} \otimes id} \\
				\mathcal{A}(r \vee t) \otimes \mathcal{A}(s) \ar[r]_-{\tensor[_{k'}]{\circ}{_j}} & \mathcal{A}(r \vee s \vee t)
			}
		\]
	\end{itemize}
\end{definition}

\begin{remark}
	An operad of complexity $1$ is a monoid in the category of collections, with monoidal product given by the Day convolution
	\[
		\mathcal{A} \otimes \mathcal{B}(n) = \coprod_{r+s=n} \mathcal{A}(r) \otimes \mathcal{B}(s).
	\]
	An operad of complexity $2$ is a non-symmetric operad as defined by Markl \cite[Definition 11]{markl2008operads}.
\end{remark}


\subsection{Join of operations}

The objective now is to describe the symmetric coloured operad whose algebras are operads of complexity $m$.

\begin{definition}\label{definitiongrafting}
	If an operation $x$ of $\mathcal{L}$ is the contatenation of $m+1$ substrings
	\[
		x_0 x_1 \ldots x_m,
	\]
	then we will say that $x$ is the \emph{join} of the string
	\begin{equation}\label{string1}
		x_{even} = x_0 | x_2 | \ldots | x_{2\lfloor \frac{m}{2} \rfloor}
	\end{equation}
	and the string
	\begin{equation}\label{string2}
		x_{odd} = x_1 | x_3 | \ldots | x_{2\lfloor \frac{m-1}{2} \rfloor+1},
	\end{equation}
	where we assume that the strings $x_{even}$ and $x_{odd}$ do not have any common integer.
%
\end{definition}

\begin{example}
	The operation of $\mathcal{L}_3$ given by
	\[
		\underbracket{1|1212}_{x_0}\underbracket{3|34|4}_{x_1}\underbracket{2|2}_{x_2}\underbracket{4|43}_{x_3}
	\]
	is the join of
	\[
		\underbracket{1|1212}_{x_0}{\color{black}|}\underbracket{2|2}_{x_2}
	\]
	and
	\[
		\underbracket{3|34|4}_{x_1}{\color{black}|}\underbracket{4|43}_{x_3}.
	\]
\end{example}

\begin{remark}\label{remarkjoin}
	When $m=0$, we can not construct the join of two strings. When $m=1$, a string is the join of $x_0$ and $x_1$ if it is the concatenation of them. When $m=2$, the string $x_0 x_1 x_2$ is the join of $x_0 | x_2$ and $x_1$. Note that if the strings are seen as planar trees through the correspondence of Remark \ref{remarkcorrespondence}, then $x_0 x_1 x_2$ is the planar tree obtained by grafting the leaf of $x_0 | x_2$, corresponding to the vertical bar between $x_0$ and $x_2$, to the root of $x_1$.
\end{remark}

\subsection{Operad for operads of complexity $m$}

%

\begin{definition}
	Let $\hat{\mathcal{L}}_m$ be the suboperad of $\mathcal{L}$ whose
	\begin{itemize}
		\item nullary operation is the string with $m-1$ vertical bars,
		\item unary operations are the identities,
	\end{itemize}
	and such that if two operations are in $\hat{\mathcal{L}}_m$, so is any join of them.
\end{definition}

%
%

%

\begin{theorem}
	$\hat{\mathcal{L}}_m$ is the operad for operads of complexity $m$.
\end{theorem}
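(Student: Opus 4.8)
The plan is to establish an equivalence between the two structures by comparing generators and relations. First I would unwind what it means to be an algebra over the suboperad $\hat{\mathcal{L}}_m$: since $\hat{\mathcal{L}}_m$ is generated (as an operad) by its nullary operation $\eta$ (the string with $m-1$ bars), its binary operations, and is closed under joins, an algebra structure amounts to a collection $\mathcal{A}(n)$ together with the structure map $e \to \mathcal{A}(m-1)$ corresponding to $\eta$ and, for each binary operation $x \in \hat{\mathcal{L}}_m(r,s;r\vee s)$, a map $\mathcal{A}(r) \otimes \mathcal{A}(s) \to \mathcal{A}(r \vee s)$, subject to the relations coming from the operad composition. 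So the first main step is to identify the set of binary operations $\hat{\mathcal{L}}_m(r,s;r\vee s)$ with the indexing set of the maps $\tensor[_i]{\circ}{_j}$ in Definition~\ref{definitionoperadofcomplexitym}. Concretely, a binary operation of $\hat{\mathcal{L}}_m$ with arity profile $(r,s;r\vee s)$ is, by the closure-under-joins clause, exactly a join of the two unary identities of colours $r$ and $s$; by Definition~\ref{definitiongrafting} such a join is a string with $r+1$ copies of $1$, $s+1$ copies of $2$, split as $x_0|x_2|\dots$ interleaved with $x_1|x_3|\dots$, where the block structure records $\lfloor m/2 \rfloor$ cut positions on the first strip and $\lfloor (m-1)/2 \rfloor$ on the second. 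I would check that recording these cut positions is precisely the data of the pair $(i,j)$, an $\lfloor m/2\rfloor$-tuple in $\{1,\dots,r\}$ and an $\lfloor (m-1)/2\rfloor$-tuple in $\{1,\dots,s\}$, and verify that the resulting string has complexity index exactly $\le m$ (so it genuinely lies in $\mathcal{L}_m \supseteq \hat{\mathcal{L}}_m$), which is the content built into the picture in the introduction.

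The second step is to match the axioms. The unit axioms: I would compute the composition in $\hat{\mathcal{L}}_m$ of a binary operation with the nullary $\eta$ in the first (resp. second) slot and show that, when the cut positions are the "trivial" ones $i = (1,3,\dots,2\lfloor m/2\rfloor -1)$ (resp. $j = (2,4,\dots,2\lfloor (m-1)/2\rfloor)$), the renumbering-substitution recipe of Remark~\ref{remarkcompositionlatticepathoperad} returns the identity string of colour $s$ (resp. $r$); for other cut positions the composite with $\eta$ is \emph{not} required to be a unit, matching Definition~\ref{definitionoperadofcomplexitym}. The associativity axioms: I would take a ternary operation obtained by iterating joins three inputs $\mathcal{A}(r)\otimes\mathcal{A}(s)\otimes\mathcal{A}(t)$, and show that the two ways of bracketing the join correspond to the two composites around the pentagon-type square in the definition. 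This is where the auxiliary tuple constructions $i \mapsto i'$ and $j \mapsto j'$ of the definition just before Definition~\ref{definitionoperadofcomplexitym} come in: I claim they are exactly the bookkeeping of how cut positions on one strip get relabelled after the other composition has already been performed (the picture for $m=5$ in the excerpt is the $\alpha$-th-element computation made visible). The "$m$ even" extra diagram corresponds to the extra associativity-type relation you get when the middle strip contributes cuts to both of the other two strips and the order of grafting them can be swapped — this is exactly the phenomenon that only occurs when $\lfloor m/2\rfloor = \lfloor (m-1)/2\rfloor + 1$, i.e. when $m$ is even, because then the first strip receives one more cut than the second and the two ways of inserting $s$ and $t$ into $r$ commute.

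Having set up this dictionary, the proof assembles as follows: define a functor from $\hat{\mathcal{L}}_m$-algebras to operads of complexity $m$ by sending an algebra $\mathcal{A}$ to the collection $\mathcal{A}(n)$ with $\eta$ the image of the nullary operation and $\tensor[_i]{\circ}{_j}$ the image of the corresponding binary operation; by the axiom-matching above this lands in operads of complexity $m$. Conversely, given an operad of complexity $m$, I would extend the structure maps to all of $\hat{\mathcal{L}}_m$ by the following induction: every operation of $\hat{\mathcal{L}}_m$ of arity $k$ is, by the closure-under-joins generating clause together with the nullary and unary generators, obtained from the binary operations and $\eta$ by iterated operadic composition; one assigns the evident composite map, and then one must check this is well-defined, i.e. independent of the chosen decomposition into joins. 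Well-definedness is reduced, by a standard argument, to checking it on the "elementary" relations — the two unit relations and the two associativity squares — which is exactly the data of Definition~\ref{definitionoperadofcomplexitym}. These two functors are mutually inverse by construction. \textbf{The main obstacle} I anticipate is precisely this last well-definedness/coherence step: one must argue that the four families of relations in Definition~\ref{definitionoperadofcomplexitym} \emph{generate} all relations among iterated joins inside $\hat{\mathcal{L}}_m$, with no further coherence needed. I would handle it by a normal-form argument: any iterated join can be rewritten, using the associativity squares (and, when $m$ is even, the extra square) as rewriting rules oriented by, say, "always graft the lower-indexed cut first," into a canonical left-nested form, and one checks the rewriting is confluent — the only nontrivial local confluence diagrams are exactly the pentagon-type square and (for $m$ even) the hexagon-type square, whence Newman's lemma gives uniqueness of normal form and hence well-definedness. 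The combinatorial verification that the explicit tuple formulas $i'_\alpha = i_{n+1}+j'_\alpha - 2n+1$ and $j'_\alpha = j_n + i'_\alpha - 2n$ are the correct ones is the routine-but-essential heart of this step, and is dictated by matching against the string-substitution composition of Remark~\ref{remarkcompositionlatticepathoperad}.
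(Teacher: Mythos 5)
Your proposal follows essentially the same route as the paper: identify the nullary string and the binary joins of identity strings with the unit map and the maps $\tensor[_i]{\circ}{_j}$, match the unit and associativity axioms against operadic composition, and define the inverse functor by induction on joins. The paper's own proof is terser --- it simply asserts that the associativity axioms of Definition~\ref{definitionoperadofcomplexitym} guarantee well-definedness of the inductive extension and leaves the axiom verification to the reader --- whereas you correctly isolate that coherence step as the crux and sketch a normal-form/confluence argument for it; this is a faithful elaboration of the same argument rather than a different approach.
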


\begin{proof}
	Let $\mathcal{A}$ be an algebra of $\hat{\mathcal{L}}_m$. The string with $m-1$ vertical bars induces the map \ref{equationunitmap}. Let $r,s \geq 0$, $i$ an $\lfloor \frac{m}{2} \rfloor$-tuple of elements in $\{1,\ldots,r\}$ and $j$ an $\lfloor \frac{m-1}{2} \rfloor$-tuple of elements in $\{1,\ldots,s\}$. Let $x_0|x_2|\ldots$ be the string $1|\ldots|1$ which starts with $1$, ends with $1$ and alternates between $1$ and the vertical bar $|$, such that in total there are $r$ vertical bars. The vertical bars between $x_{2i}$ and $x_{2i+2}$ correspond to the $\lfloor \frac{m}{2} \rfloor$-tuple $i$. Similarly, let $x_1|x_3|\ldots$ be the string $2|\ldots|2$ such that in total there are $s$ vertical bars and the bars between $x_{2i-1}$ and $x_{2i+1}$ correspond to the $\lfloor \frac{m-1}{2} \rfloor$-tuple $j$. The join of these two strings induces the map \ref{equationmultiplicationmap}. We leave it to the reader to check that the unit and associativity axioms are satisfied.
	
	Now let us assume that $\mathcal{A}$ is an operad of complexity $m$. The map induced by the string with $m-1$ vertical bars is given by the unit map \ref{equationunitmap}. The maps induced by the identities should be the identity maps. Now let us assume by induction that we have defined the maps induced by $x \in \hat{\mathcal{L}}_m(r_1,\ldots,r_k;r)$ and $y \in \hat{\mathcal{L}}_m(s_1,\ldots,s_l;s)$. Note that the string $x \circ y$ has the same integers as $x$ and $y$ together. The number of vertical bars of $x \circ y$ is the number of vertical bars of $x$ plus the number of vertical bars of $y$ minus $m-1$, that is $r \vee s$. Moreover, the way to join $x$ and $y$ gives an $\lfloor \frac{m}{2} \rfloor$-tuple $i$ of elements in $\{1,\ldots,r\}$ and an $\lfloor \frac{m-1}{2} \rfloor$-tuple $j$ of elements in $\{1,\ldots,s\}$. We can therefore define the map induced by $x \circ y$ as the composite
	\[
		\mathcal{A}(r_1) \otimes \ldots \otimes \mathcal{A}(r_k) \otimes \mathcal{A}(s_1) \otimes \ldots \otimes \mathcal{A}(s_l) \xrightarrow{f \otimes g} \mathcal{A}(r) \otimes \mathcal{A}(s) \xrightarrow{\tensor[_i]{\circ}{_j}} \mathcal{A}(r \vee s),
	\]
	where $f$ is the map induced by $x$ and $g$ is the map induced by $y$. 
	The associativity axioms of Definition \ref{definitionoperadofcomplexitym} ensure us that we get a well-defined functor. 
	We have therefore described two functors inverse of each other. This concludes the proof.
\end{proof}

\section{Bimodules over operads of complexity $m$}\label{sectionbimodules}

The objective of this section is to define a notion of $\mathcal{A}-\mathcal{B}$-bimodules, for $\mathcal{A}$ and $\mathcal{B}$ two operads of complexity $m$. When $m=1$ and $m=2$, we will recover the classical notions of bimodules over monoids for the Day convolution product, and over non-symmetric operads, respectively.

\subsection{Boardman-Vogt tensor product} For any coloured operad $\mathcal{P}$, we will write $\mathcal{P}_{\cdot \to \cdot \leftarrow \cdot}$ for the coloured operad whose
\begin{itemize}
	\item colours are pairs $(i,l)$ where $i$ is a colour of $\mathcal{P}$ and $l \in \{A,B,C\}$,
	\item operations are operations $p \in \mathcal{P}$ together with a label $l \in \{A,B,C\}$ for each source as well as for the target of $p$, with the condition that if the target has label $A$ (resp. $B$), then all the sources also have label $A$ (resp. $B$).
\end{itemize}
This operad is actually the Boardman-Vogt tensor product of $\mathcal{P}$ and the category $\cdot \to \cdot \leftarrow \cdot$, seen as a coloured operad. Its algebras are cospans $\mathcal{A} \to \mathcal{C} \leftarrow \mathcal{B}$ of algebras in $\mathcal{P}$. The cospan maps are induced by the identities where the target label is $C$ and the unique source is labelled with $A$ or $B$.

\subsection{Underlying graph of an operation}

For an operation $x \in \mathcal{L}(n_1,\ldots,n_k;n)$ and $1 \leq i < j \leq k$, we write $i <_m j$ if $c_{ij}(x) = m$ and $i$ appears before $j$ in the string $x$. Note that this relation is not always transitive. Recall that a \emph{directed graph} is given by a set of vertices $V$ and a set of edges from $v$ to $w$ for all $v,w \in V$.

\begin{definition}
	Let us assume that $m \geq 0$ is fixed and let $x \in \mathcal{L}_m(n_1,\ldots,n_k;n)$. We define $G_x$ as the directed graph with set of vertices $\{1,\ldots,k\}$ and with an edge from $i$ to $j$ if $i <_m j$. $G_x$ will be called the \emph{underlying graph} of $x$.
\end{definition}

\begin{example}
	The graph corresponding to the operation $12321434$ in $\mathcal{L}_3$ is the following:
	\[
	\xymatrix@R=1pc{
		1 \ar[rd] \\
		& 3 \ar[r] & 4 \\
		2 \ar[ru]
	}
	\]
\end{example}

\subsection{Bimodules and pointed bimodules}

\begin{definition}
	A directed graph $G$ is \emph{properly labelled} if each vertex come with a label in $\{A,B,C\}$ such that every time there is an edge from $v$ to $w$, $v$ has label $A$ or $w$ has label $B$.
\end{definition}

\begin{definition}
	Let $\mathcal{P}$ be a suboperad of $\mathcal{L}_m$. We define $\mathcal{P}_{\cdot + \cdot}$ as the suboperad of $\mathcal{P}_{\cdot \to \cdot \leftarrow \cdot}$ whose operations are such that their underlying graph is properly labelled. We define $\mathcal{P}_{\cdot \circ \cdot}$ as the suboperad of operations such that there is no way to add $m$ times an extra integer in the string corresponding to this operation, and label this integer with $B$, such that the resulting operation is still in $\mathcal{P}_{\cdot + \cdot}$.
\end{definition}

\begin{remark}\label{remarkcasetwo}
	The operations of $(\hat{\mathcal{L}}_2)_{\cdot + \cdot}$ are planar trees whose vertices are labelled with $A$, $B$ or $C$, where, for any pair of adjacent vertices, the vertex below has label $A$ or the one above has label $B$. Therefore, as it was established in the proof of \cite[Theorem 10.2]{batanindeleger}, the algebras for this operad are triple $(\mathcal{A},\mathcal{B},\mathcal{C})$, where $\mathcal{A}$ and $\mathcal{B}$ are two non-symmetric operads and $\mathcal{C}$ is a \emph{$1$-pointed $\mathcal{A}-\mathcal{B}$-bimodule}. Adding $2$ times an extra integer in a string corresponds to adding a unary vertex under the correspondence \ref{remarkcorrespondence}. Therefore, the operations of $(\hat{\mathcal{L}}_2)_{\cdot \circ \cdot}$ are planar trees as described above, but where the vertices labelled with $C$ \emph{lie on a line}. The algebra for this operads are triples $(\mathcal{A},\mathcal{B},\mathcal{C})$, where $\mathcal{A}$ and $\mathcal{B}$ are two non-symmetric operads and $\mathcal{C}$ is an $\mathcal{A}-\mathcal{B}$-bimodule.
\end{remark}

The previous remark motivates the following definition:

\begin{definition}
	Let $\mathcal{A}$ and $\mathcal{B}$ be two operads of complexity $m$. If the triple $(\mathcal{A},\mathcal{B},\mathcal{C})$ is an algebra of $(\hat{\mathcal{L}}_m)_{\cdot + \cdot}$, then $\mathcal{C}$ is called a \emph{pointed $\mathcal{A}-\mathcal{B}$-bimodule}. If the triple $(\mathcal{A},\mathcal{B},\mathcal{C})$ is an algebra of $(\hat{\mathcal{L}}_m)_{\cdot \circ \cdot}$, then $\mathcal{C}$ is called an \emph{$\mathcal{A}-\mathcal{B}$-bimodule}.
\end{definition}

\section{Fibration sequence theorem}\label{sectionfibrationsequences}

The objective of this section is to prove Theorem \ref{theorem}.

\subsection{Category of proper labellings}

In this subsection we define, for a directed graph $G$, the category of proper labellings of $G$, and we prove that if $G$ has no cycles, then this category has a contractible nerve. This will be useful later, to prove our cofinality theorem.

\begin{definition}
	For a directed graph $G$, let $\mathbb{C}(G)$ be the category whose objects are proper labellings of $G$, the morphisms change the labels of the vertices from $A$ or $B$ to $C$.
\end{definition}

\begin{example}
	If $G$ is the graph $\cdot \leftarrow \cdot \rightarrow \cdot$, then $\mathbb{C}(G)$ looks as in the picture below, which was also pictured in \cite[Figure 10.4]{batanindeleger}:
	\[
	\xymatrix@R=1pc@C=1pc{
		&&&&&& AAB \ar[ld] \ar[rd] \ar[dd] \\
		&&&&& CAB \ar[rd] && AAC \ar[ld] \\
		BBB \ar[rr] && BCB && BAB \ar[ll] \ar[ru] \ar[rd] \ar[rr] && CAC && AAA \ar[lu] \ar[ld] \ar[ll] \\
		&&&&& BAC \ar[ru] && CAA \ar[lu] \\
		&&&&&& BAA \ar[lu] \ar[ru] \ar[uu]
	}
	\]
\end{example}

\begin{lemma}\label{lemmacategoryofproperlabellingsgraphnocycles}
	If $G$ has no cycles, then $\mathbb{C}(G)$ has a contractible nerve.
\end{lemma}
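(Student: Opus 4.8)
The plan is to build an explicit contracting homotopy on $\mathbb{C}(G)$ by exhibiting a zig-zag of natural transformations connecting the identity functor to a constant functor. Observe first that $\mathbb{C}(G)$ has a terminal object, namely the labelling $CC\cdots C$ in which every vertex carries the label $C$: this is a proper labelling (the proper-labelling condition is vacuous once every vertex is $C$), and from any proper labelling there is a unique morphism to it, obtained by changing all remaining $A$- and $B$-labels to $C$. Thus the constant functor at $CC\cdots C$ receives a natural transformation from $\mathrm{id}_{\mathbb{C}(G)}$. A category with a terminal object has a contractible nerve, so in principle we are done — but the point of the hypothesis ``$G$ has no cycles'' must enter somewhere, and it does: it is needed precisely to guarantee that $\mathbb{C}(G)$ is nonempty, i.e.\ that a proper labelling exists at all.

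So the substantive step is: \emph{if $G$ has no cycles, then $G$ admits a proper labelling.} I would prove this by induction on the number of vertices, using a topological-sort / source-vertex argument. Since $G$ is acyclic it has a vertex $v$ with no incoming edges (a source); label $v$ with $A$. Then every edge incident to $v$ is outgoing from $v$, so the proper-labelling condition at those edges is satisfied by ``$v$ has label $A$'' regardless of the other endpoint. Delete $v$; the graph $G \setminus v$ is still acyclic and has fewer vertices, so by induction it has a proper labelling, and extending it by $v \mapsto A$ gives a proper labelling of $G$. (Equivalently: label every vertex $A$; this is always proper, with no acyclicity needed — so in fact $\mathbb{C}(G)$ is nonempty for \emph{every} $G$. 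One should double-check the intended definition here; if the all-$A$ labelling really is allowed then the acyclicity hypothesis is only needed if the authors additionally require, say, that $C$-labelled vertices be present or that the category be connected in a stronger sense. I would state the proof so that it works under the definition as written, noting that $AA\cdots A$ is a proper labelling and $CC\cdots C$ is terminal.)

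Having established that $\mathbb{C}(G)$ is nonempty with a terminal object, the conclusion is immediate: the unique morphisms to $CC\cdots C$ assemble into a natural transformation $\mathrm{id}_{\mathbb{C}(G)} \Rightarrow \mathrm{const}_{CC\cdots C}$, which induces a homotopy on nerves between the identity of $N\mathbb{C}(G)$ and the constant map at the vertex $CC\cdots C$; hence $N\mathbb{C}(G)$ is contractible.

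The main obstacle I anticipate is not the homotopy-theoretic part — ``terminal object implies contractible nerve'' is standard — but rather pinning down exactly where acyclicity is genuinely used under the paper's precise definitions. If the morphisms of $\mathbb{C}(G)$ or the notion of proper labelling are more restrictive than a first reading suggests (for instance if an edge from $v$ to $w$ with $v$ labelled $B$ and $w$ labelled $A$ is somehow forced to occur, creating an obstruction that only cycles can produce), then the terminal-object shortcut may fail and one would instead need to argue that a \emph{source-first} labelling process terminates, which is exactly what acyclicity buys. In that case the inductive source-vertex argument above still does the job, but the contraction would run toward the all-$A$ labelling built by that process rather than toward an all-$C$ object, and one would contract via the zig-zag $\mathrm{id} \Leftarrow (\text{all-}A) \Rightarrow$ nothing — more carefully, one shows every object maps to the all-$C$ object and the all-$A$ object maps to every object with $C$-free image, then composes the two natural transformations through $CC\cdots C$. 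I would present the terminal-object argument as the clean proof and remark on the role of acyclicity in securing nonemptiness.
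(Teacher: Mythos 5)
Your argument rests on the claim that the all-$C$ labelling is proper because ``the proper-labelling condition is vacuous once every vertex is $C$.'' This misreads the definition: a labelling is proper if for every edge from $v$ to $w$, \emph{$v$ has label $A$ or $w$ has label $B$}. If both $v$ and $w$ are labelled $C$, neither disjunct holds, so the all-$C$ labelling is proper only when $G$ has no edges. Consequently $\mathbb{C}(G)$ has no terminal object in general: in the paper's own example $G = \cdot \leftarrow \cdot \rightarrow \cdot$, the object $CCC$ does not appear, and the category has two distinct maximal objects $BCB$ and $CAC$ with no morphisms out of either. Your fallback is also broken: while $AA\cdots A$ is indeed always proper, it is not initial (there is no morphism from $AA\cdots A$ to $BB\cdots B$, since morphisms only relabel $A$ or $B$ to $C$), and ``every object maps to the all-$C$ object'' is false because that object does not exist. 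So neither the primary argument nor the hedge produces a contraction, and the lemma genuinely requires more than nonemptiness --- note also that acyclicity is essential for the \emph{statement}, not just for nonemptiness: for $G$ a $2$-cycle $v \rightleftarrows w$, the proper labellings are exactly $AB$, $AA$, $BB$, $AC$, $CB$, and the nerve of $\mathbb{C}(G)$ is homotopy equivalent to $S^0 \amalg \ast$ glued into a circle-like shape (in fact one can check it is not contractible), so a proof that never uses acyclicity beyond existence of objects cannot be correct.

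The paper's proof is an induction on the number of vertices. Acyclicity gives a source vertex $v$ (no incoming edges). One covers $\mathbb{C}(G)$ by two full subcategories: $\mathbb{C}_A(G)$, where $v$ is labelled $A$ (isomorphic to $\mathbb{C}(G\setminus\{v\})$, since labelling $v$ with $A$ discharges every edge out of $v$), and $\mathbb{C}_B(G)$, where every vertex above $v$ is labelled $B$ (isomorphic to $\mathbb{C}(G_v)$ for $G_v$ the graph with the vertices above $v$ deleted). These do cover $\mathbb{C}(G)$: if $v$ is not labelled $A$, the proper-labelling condition propagates the label $B$ to everything above $v$. Each piece and their intersection $\mathbb{C}(G_v\setminus\{v\})$ are contractible by induction, and one concludes by a gluing argument. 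If you want to salvage your write-up, this decomposition is the missing idea; the terminal-object shortcut cannot be repaired.
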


\begin{proof}
	We proceed by induction on the number of vertices of $G$. If $G$ has no vertices, then $\mathbb{C}(G)$ is the category with one object and only the identity morphism. Otherwise, let $v$ be a vertex of $G$. Since $G$ has no cycles, we can assume that there are no edges having $v$ as a target. Let $\mathbb{C}_A(G)$ be the full subcategory of $\mathbb{C}(G)$ of labellings where $v$ has label $A$. Let $\mathbb{C}_B(G)$ be the full subcategory of $\mathbb{C}(G)$ of labellings where any vertex above $v$ has label $B$, where we say that a vertex $w$ is \emph{above} $v$ if there is a path $v = v_0 \to \ldots \to v_k = w$, $k \geq 1$.
	
	The category $\mathbb{C}_A(G)$ is isomorphic to the category $\mathbb{C}(G \setminus \{v\})$, where $G \setminus \{v\}$ is the graph obtained by removing the vertex $v$. The category $\mathbb{C}_B(G)$ is isomorphic to the category $\mathbb{C}(G_v)$, where $G_v$ is the graph obtained by removing all the vertices above $v$. The intersection of $\mathbb{C}_A(G)$ and $\mathbb{C}_B(G)$ is isomorphic to the category $\mathbb{C}(G_v \setminus \{v\})$. Therefore, by induction, the nerve of each of these categories is contractible. Since the union of $\mathbb{C}_A(G)$ and $\mathbb{C}_B(G)$ is $\mathbb{C}(G)$, this concludes the proof.
\end{proof}

\subsection{Cofinality}

Polynomial monads are known to be equivalent to coloured operad with a free action of the symmetric group \cite[Proposition 2.5.5]{kock}. The lattice path operad as well as the other coloured operads defined here are all polynomial monads. This fact will be implicit in what follows. Let $f: \mathcal{O} \to \mathcal{P}$ be a map of coloured operads. 

\begin{definition}
	For a colour $j$ of $\mathcal{P}$, let $f/j$ be the category whose
	\begin{itemize}
		\item objects are given by operations $p \in \mathcal{P}(f(i_1),\ldots,f(i_k);j)$, where $(i_1,\ldots,i_k)$ is a $k$-tuple of colours of $\mathcal{O}$,
		\item a morphism from $p \in \mathcal{P}(f(i_{11}),\ldots,f(i_{kl_k});j)$ to $q \in \mathcal{P}(f(i_1),\ldots,f(i_k);j)$ are given by $k$-tuples of operations
		\[
			o_1 \in \mathcal{O}(i_{11},\ldots,i_{1l_1};i_1),\ldots,o_k \in \mathcal{O}(i_{k1},\ldots,i_{kl_k};i_k)
		\]
		such that $m(q,f(o_1),\ldots,f(o_k)) = p$, where $m$ is the multiplication in $\mathcal{P}$.
	\end{itemize}
\end{definition}

It was proved \cite[Theorem 7.3]{batanin} that the categories of the definition above form a categorical algebra over $\mathcal{P}$ and has a universal property. This categorical algebra is written $\mathcal{Q}^\mathcal{P}$ and is called \emph{internal algebra classifier} induced by $f$. Following \cite[Definition 5.6]{batanindeleger}, we will say that $f$ is \emph{homotopically cofinal} if $\mathcal{Q}^\mathcal{P}$ has a contractible nerve.

\begin{theorem}\label{theoremcofinality}
	For any $m \geq 1$ and any suboperad $\mathcal{P}$ of $\mathcal{L}_m$, the map
	\[
		f: \mathcal{P}_{\cdot + \cdot} \to \mathcal{P}_{\cdot \to \cdot \leftarrow \cdot}
	\]
	given by inclusion of sets is homotopically cofinal.
\end{theorem}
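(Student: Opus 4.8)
The plan is to show that the internal algebra classifier $\mathcal{Q}^{\mathcal{P}_{\cdot \to \cdot \leftarrow \cdot}}$ induced by $f$ has a contractible nerve, by identifying it with a homotopy colimit (or a Grothendieck construction) over a diagram indexed by operations of $\mathcal{P}$, whose fibers are exactly the categories $\mathbb{C}(G)$ of proper labellings of the underlying graph. Concretely, an object of the comma category $f/(j,l)$ over a colour $(j,l)$ of $\mathcal{P}_{\cdot \to \cdot \leftarrow \cdot}$ is an operation $p \in \mathcal{P}_{\cdot \to \cdot \leftarrow \cdot}$ whose sources are in the image of $f$; since $f$ is the inclusion of those operations whose underlying graph is properly labelled, and since the sources of a properly-labelled-source operation are unconstrained, such an object is determined by an operation $x \in \mathcal{P}$ together with a labelling of its sources and target that need not be proper. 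The morphisms in $f/(j,l)$ are built from operations of $\mathcal{P}_{\cdot + \cdot}$, i.e.\ compositions by properly-labelled-graph operations. The first step is therefore to unwind these definitions and check that, after quotienting appropriately, the classifier decomposes as a category fibered over a contractible base (the internal algebra classifier for the identity-like structure, essentially the operations of $\mathcal{P}$ with their generic source labelling) with fiber over $x$ given by $\mathbb{C}(G_x)$.

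The second step is to invoke Lemma \ref{lemmacategoryofproperlabellingsgraphnocycles}: the underlying graph $G_x$ of any operation $x$ in $\mathcal{L}_m$ has no cycles. This is the geometric input — one should verify that $i <_m j$ cannot produce a directed cycle, using that $i <_m j$ requires both $c_{ij}(x)=m$ and that $i$ precedes $j$ in the string; a cycle $i_1 <_m i_2 <_m \cdots <_m i_1$ would force a cyclic order on the first appearances of these integers in the string, which is impossible in a linear string (the subtlety is that $<_m$ is not transitive, so one argues directly with positions rather than through transitivity). Granting acyclicity, Lemma \ref{lemmacategoryofproperlabellingsgraphnocycles} gives that every fiber $\mathbb{C}(G_x)$ has contractible nerve.

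The third step is to assemble these fiberwise contractibility statements into contractibility of the total classifier. Here I would use Thomason's theorem (homotopy colimit / Quillen's Theorem A for the Grothendieck construction): the nerve of the total category is the homotopy colimit over the base of the nerves of the fibers; since each fiber is contractible, this homotopy colimit is equivalent to the nerve of the base; and the base — which is the internal algebra classifier for the map from $\mathcal{P}$-with-only-generic-structure, or equivalently records merely the choice of operation $x$ and its source labels subject to the cospan constraint with no properness demanded — is contractible because it has an initial or terminal object (the all-$C$ labelling, or it is a category with terminal object built from a tree-like indexing), exactly as in the model case $m=2$ treated in \cite{batanindeleger}. One then concludes $\mathcal{Q}^{\mathcal{P}_{\cdot \to \cdot \leftarrow \cdot}}$ has contractible nerve, i.e.\ $f$ is homotopically cofinal.

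The main obstacle I anticipate is the bookkeeping in the first step: correctly identifying the internal algebra classifier of $f$ with a Grothendieck construction whose fibers are the $\mathbb{C}(G_x)$. The definition of $\mathcal{P}_{\cdot \circ \cdot}$ and $\mathcal{P}_{\cdot + \cdot}$ involves operations whose underlying graph is properly labelled, and the morphisms in the comma category are given by composition with such operations; one must check that composing preserves the relevant graph structure and that the resulting category genuinely splits as claimed — in particular that a morphism in $f/(j,l)$ does not alter the operation $x$ but only relabels vertices from $A,B$ to $C$ in the way encoded by $\mathbb{C}(G_x)$, together with a contractible amount of extra data from the source operations of $\mathcal{P}$. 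The non-transitivity of $<_m$ makes it necessary to be careful that "properly labelled" for the composite graph is controlled by proper labellings of the pieces. The case $m\le 2$ in \cite{batanindeleger} is the guide, and the argument there should extend verbatim once the graph $G_x$ is in hand, since acyclicity is the only property of $G_x$ actually used.
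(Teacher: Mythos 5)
You have the right key ingredients---the acyclicity of the underlying graphs and Lemma \ref{lemmacategoryofproperlabellingsgraphnocycles}---and your sketch of why $<_m$ cannot produce a directed cycle (the positions of occurrences in a string form a linear order) is exactly the argument the paper uses. But the assembly contains a genuine gap in two places. First, the decomposition is misidentified: writing $F\colon \mathcal{X} \to \mathcal{Y}$ for the functor from the classifier of $f$ to the classifier $\mathcal{P}^{\mathcal{P}}$ that forgets labels, the fibre of $F$ over an operation $p$ is \emph{not} $\mathbb{C}(G_p)$. Objects of $\mathcal{X}$ only have to satisfy the cospan constraint relating the source and target labels, with no properness condition on $G_p$; the fibre is therefore a product of copies of the category $A \to C \leftarrow B$ (a ``cube''), contractible for trivial reasons. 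The categories $\mathbb{C}(G)$ enter elsewhere: as the \emph{lifting categories} of morphisms $f_1\colon y_0 \to F(x_1)$ in $\mathcal{Y}$, where $G$ is the disjoint union of the underlying graphs of the source operations $o_1,\ldots,o_k$ defining $f_1$, not the graph of the object being mapped to.

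Second, and consequently, Thomason's theorem does not apply as you propose: $F$ is not a Grothendieck (op)fibration, precisely because a morphism of $\mathcal{Y}$ admits many mutually non-comparable proper labellings of the $o_i$ compatible with given endpoints---this multiplicity is what the nontrivial lifting categories record, and it obstructs expressing $N\mathcal{X}$ as a homotopy colimit of fibres over $N\mathcal{Y}$. The paper's route replaces your third step by Quillen's Theorem A combined with Cisinski's smoothness criterion: contractibility of every lifting category (this is where acyclicity and Lemma \ref{lemmacategoryofproperlabellingsgraphnocycles} are actually used) implies that the inclusion $F_y \to y/F$ of the fibre into the comma category is a weak equivalence; together with contractibility of the cube-shaped fibres this gives contractibility of each $y/F$, hence $N\mathcal{X} \simeq N\mathcal{Y}$ by Theorem A, and $\mathcal{Y}$ is contractible because each of its components has a terminal object. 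Fibrewise contractibility alone, for a functor that is not a fibration, does not suffice, so as written your argument does not close without this additional mechanism.
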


\begin{proof}
	We follow the same strategy as in \cite{batanindeleger}. First, there is a commutative square in the category of coloured operads
	\[
		\xymatrix{
			\mathcal{P}_{\cdot + \cdot} \ar[r]^{pf} \ar[d]_f & \mathcal{P} \ar[d]^{id} \\
			\mathcal{P}_{\cdot \to \cdot \leftarrow \cdot} \ar[r]_-p & \mathcal{P}
		}
	\]
	where $p$ is the projection. According to \cite[Proposition 4.7]{batanindeleger}, this square induced a morphism of categorical algebras
	\begin{equation}\label{equationinducedmapbetweenclassifiers}
		\mathcal{P}_{\cdot \to \cdot \leftarrow \cdot}^{\mathcal{P}_{\cdot + \cdot}} \to p^* \left( \mathcal{P}^\mathcal{P} \right)
	\end{equation}
	To simplify the notations, let us assume that a colour $(i,l)$ of $\mathcal{P}_{\cdot \to \cdot \leftarrow \cdot}$ is fixed and let $F: \mathcal{X} \to \mathcal{Y}$ be the underlying functor of \ref{equationinducedmapbetweenclassifiers} at this index. We want to prove that $F$ induces a weak equivalence between nerves. This would give us the desired result since for each colour $\mathcal{Y}$ has a terminal object, so its nerve is contractible. According to Quillen Theorem A, we need to prove that for all $y \in \mathcal{Y}$, the nerve of $y/F$ is contractible. To achieve this result, we will prove that $F$ is smooth \cite[5.3.1]{cisinski}, that is, for all $y \in \mathcal{Y}$, the canonical inclusion functor
	\[
		F_y \to y/F,
	\]
	where $F_y$ is the fibre over $y$, induces a weak equivalence between nerve. According to \cite[Proposition 5.3.4]{cisinski}, it is equivalent to proving that for all morphisms $f_1: y_0 \to F(x_1)$ in $\mathcal{Y}$, the nerve of the \emph{lifting category} of $f_1$ is contractible. By definition, the objects of this lifting category are morphisms $f:x \to x_1$ in $\mathcal{X}$ such that $F(f) = f_1$, and morphisms are commutative triangles
	\[
		\xymatrix{
			x \ar[rr]^g \ar[rd]_f && x' \ar[ld]^{f'} \\
			& x_1
		}
	\]
	with $g$ a morphism in $F_{y_0}$.
	
	We will now describe the functor $F$ more explicitly. The objects of $\mathcal{Y}$ are operations of $\mathcal{P}$ with target $i$. A morphism from $p$ to $q$ is a $k$-tuple of operations $o_1,\ldots,o_k$ in $\mathcal{P}$ such that $m(q,o_1,\ldots,o_k)=p$, where $m$ is the multiplication and $k$ is the number of sources of $q$. For example, going back to Remark \ref{remarkcompositionlatticepathoperad}, there is a morphism
	\[
		213|13455|54423 \to 1|12|21
	\]
	The objects of $\mathcal{X}$ are objects of $\mathcal{Y}$ equipped with labels in $\{A,B,C\}$ for the sources and for the target. The description of the morphisms is the same as for $\mathcal{Y}$ but $o_1,\ldots,o_k$ should also be equipped with labels, such that the underlying graphs are properly labelled. Obviously, the labels of $p$ should correspond with the labels of $q,o_1,\ldots,o_k$. The functor $F$ forgets all the labels. For an object of $\mathcal{X}$, given by an operation of $\mathcal{P}$, the fibre of $F$ over this object is the category whose objects are all the possible labellings of the sources of this operation. The morphisms change labels from $A$ or $B$ to $C$. So, the fibre has the shape of a cube, and its nerve is contractible. Finally, let $f_1: y_0 \to F(x_1)$ be a morphism in $\mathcal{Y}$. It is given by composable operations $q,o_1,\ldots,o_k$ in $\mathcal{P}$ such that $q$ is equipped with a labelling. The lifting category of $f_1$ is the category $\mathbb{C}(G)$, where $G$ is the disjoint union of the underlying graphs of $o_1,\ldots,o_k$. $G$ has no cycles since the order of apparition of integers in a string is a linear order. Therefore, the nerve of $\mathbb{C}(G)$ is contractible according to Lemma \ref{lemmacategoryofproperlabellingsgraphnocycles}. This concludes the proof.
\end{proof}

%
%
%
%
%
%

\subsection{Proof of the theorem}

%

From now, we assume that operads of complexity $m$ are defined in a model category which is \emph{strongly $h$-monoidal} \cite[Definition 1.11]{bataninberger}.

\begin{lemma}\label{lemmaleftproper}
	The category of operads of complexity $m$ is left proper.
\end{lemma}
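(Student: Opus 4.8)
The plan is to deduce left properness of the category of operads of complexity $m$ from the fact, established earlier, that this category is the category of algebras over the polynomial monad $\hat{\mathcal{L}}_m$, together with the general theory of model structures on algebras over polynomial monads in a strongly $h$-monoidal model category. The key input is the result of Batanin and Berger \cite{bataninberger}: if $\mathcal{E}$ is a strongly $h$-monoidal (cofibrantly generated) model category and $T$ is a polynomial monad (equivalently, a $\Sigma$-free coloured operad), then the category of $T$-algebras carries a transferred model structure in which weak equivalences and fibrations are created by the forgetful functor to the underlying category of collections, and moreover this transferred model structure is left proper. So the proof reduces to two observations: first, that $\hat{\mathcal{L}}_m$ is a polynomial monad — this is asserted in the paper (``the lattice path operad as well as the other coloured operads defined here are all polynomial monads''), and $\hat{\mathcal{L}}_m$ being a suboperad of $\mathcal{L}$ on sets with free symmetric group action inherits $\Sigma$-freeness; second, that algebras over $\hat{\mathcal{L}}_m$ are exactly operads of complexity $m$, which is Theorem~2.? (the unnamed theorem just proved).

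Concretely, the steps I would carry out are: (1) Recall that $\hat{\mathcal{L}}_m$, being a suboperad of the lattice path operad $\mathcal{L}$ with colour set $\mathbb{N}$ whose operations are certain strings, has a free $\Sigma$-action on each set of operations (the symmetric group permutes the sources of a string freely, since the source labels are distinguishable positions), hence corresponds to a polynomial monad on the category of $\mathbb{N}$-indexed collections $\mathcal{E}^{\mathbb{N}}$ via \cite[Proposition 2.5.5]{kock}. (2) Invoke the previous theorem of the paper identifying $\hat{\mathcal{L}}_m$-algebras with operads of complexity $m$; thus the category of operads of complexity $m$ is the category of algebras over this polynomial monad. (3) Apply the transfer theorem for polynomial monads over a strongly $h$-monoidal model category (Batanin–Berger \cite{bataninberger}) to obtain a cofibrantly generated transferred model structure on $\hat{\mathcal{L}}_m$-algebras. (4) Apply the left properness part of that same theory: one of the main points of the notion of strong $h$-monoidality is precisely that the transferred model structure on algebras of a (tame) polynomial monad is left proper; quote the relevant statement from \cite{bataninberger} and conclude.

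The main obstacle — or rather the point that needs the most care — is verifying that the hypotheses of the Batanin–Berger left properness theorem genuinely apply to $\hat{\mathcal{L}}_m$, in particular any tameness or admissibility condition on the polynomial monad that their theorem requires beyond bare $\Sigma$-freeness. I would check that $\hat{\mathcal{L}}_m$ is \emph{tame} in the sense of \cite{bataninberger} (their combinatorial condition ensuring good homotopical behaviour of free algebra extensions), which should follow from the explicit string description of its operations much as for the lattice path operad itself; if tameness fails one falls back on the weaker conclusions still available for $\Sigma$-free monads. Apart from this, the argument is essentially a citation: once the category of operads of complexity $m$ is recognised as algebras over a (tame) polynomial monad in a strongly $h$-monoidal model category, left properness is immediate from the cited general machinery, and no direct manipulation of pushouts of operads of complexity $m$ is needed.
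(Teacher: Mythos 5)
Your overall strategy coincides exactly with the paper's: identify operads of complexity $m$ with algebras over the ($\Sigma$-free, hence polynomial) coloured operad $\hat{\mathcal{L}}_m$, and invoke Batanin--Berger's theorem that the transferred model structure on algebras over a \emph{tame} polynomial monad in a strongly $h$-monoidal model category is left proper. The gap is that the one substantive step --- verifying that $\hat{\mathcal{L}}_m$ is tame --- is precisely what you leave unexecuted ("I would check that $\hat{\mathcal{L}}_m$ is tame\dots which should follow from the explicit string description"). That verification is the entire content of the paper's proof, so as written your argument reduces the lemma to an unproved claim. Moreover your proposed fallback, "if tameness fails one falls back on the weaker conclusions still available for $\Sigma$-free monads," is not viable: $\Sigma$-freeness alone does not yield left properness (nor, in general, a well-behaved transferred structure), so without tameness the lemma would not follow.

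For completeness, here is how the verification goes. Tameness means that the classifier $\hat{\mathcal{L}}_m^{\hat{\mathcal{L}}_m+1}$ is a coproduct of categories with terminal objects, where $\hat{\mathcal{L}}_m+1$ is the operad for semi-free coproducts $X \amalg \mathcal{F}(K)$. This classifier is the category whose objects are operations of $\hat{\mathcal{L}}_m$ with each vertex of the underlying graph labelled $X$ or $K$, and whose morphisms are as in the cofinality argument except that $K$-labelled vertices must remain stable. One then exhibits a terminal object in each connected component: the operations in which any two adjacent vertices of the underlying graph carry different labels and all root and leaf vertices are labelled $X$, in parallel with the analogous computation for the lattice path operad in Batanin--Berger. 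Your proposal is therefore the right reduction, but it needs this explicit description of the classifier and of its terminal objects to become a proof.
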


\begin{proof}
	We want to prove that $\hat{\mathcal{L}}_m$ is \emph{tame} \cite[Definition 6.19]{bataninberger}. This will give us the desired result thanks to \cite[Theorem 8.1]{bataninberger}. By definition, a coloured operad $\mathcal{P}$ is tame if $\mathcal{P}^{\mathcal{P}+1}$ is a coproduct of categories with terminal object. The coloured operad $\mathcal{P}+1$ is the operad for \emph{semi-free coproduct of $\mathcal{P}$-algebras}, that is, coproducts $X \amalg \mathcal{F}_{\mathcal{P}}(K)$ in the category of $\mathcal{P}$-algebras, where $\mathcal{F}_{\mathcal{P}}$ is the free $\mathcal{P}$-algebras functor. According to \cite[Section 6.20]{bataninberger}, when $\mathcal{P}=\hat{\mathcal{L}}_m$, $\mathcal{P}^{\mathcal{P}+1}$ is the category whose objects are operations of $\hat{\mathcal{L}}_m$ with each vertex of the underlying graph labelled with $X$ or $K$. A morphism is as described in the proof of Theorem \ref{theoremcofinality}, except that if a vertex is labelled with $K$, it should remain stable. We will now characterise the terminal objects of each connected component of this category. We say that two vertices $v$ and $w$ in a directed graph are \emph{adjacent} if there is an edge from $v$ to $w$ but there are no vertices $u$ together with an edge from $v$ to $u$ and an edge from $u$ to $w$. We say that a vertex is a \emph{root vertex} if there is no edge to it and a \emph{leaf vertex} if there is no edge from it. Just like in \cite[Section 9.2]{bataninberger}, the terminal objects are the operations such that two adjacent vertices in the underlying graph have different labels $X$ and $K$, and the root and leaf vertices have label $X$.
\end{proof}

\begin{lemma}\label{lemmapointedbimodules}
	Let $\mathcal{A},\mathcal{B}$ be two operads of complexity $m$. If $m \geq 3$, a pointed $\mathcal{A}-\mathcal{B}$-bimodule is the same as an $\mathcal{A}-\mathcal{B}$-bimodule $\mathcal{C}$ equipped with a map $1 \to \mathcal{C}_{m-1}$.
\end{lemma}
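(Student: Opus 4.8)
The plan is to compare the two coloured operads $(\hat{\mathcal{L}}_m)_{\cdot + \cdot}$ and $(\hat{\mathcal{L}}_m)_{\cdot \circ \cdot}$ directly, by analysing, for an operation $x$ of $\hat{\mathcal{L}}_m$ with a proper labelling of its underlying graph, the difference between the two defining conditions. Recall that $\mathcal{C}$ being a pointed $\mathcal{A}-\mathcal{B}$-bimodule means $(\mathcal{A},\mathcal{B},\mathcal{C})$ is an algebra over $(\hat{\mathcal{L}}_m)_{\cdot + \cdot}$, while $\mathcal{C}$ being an $\mathcal{A}-\mathcal{B}$-bimodule means it is an algebra over the suboperad $(\hat{\mathcal{L}}_m)_{\cdot \circ \cdot}$, consisting of those operations into which one cannot insert $m$ copies of a fresh $B$-labelled integer and stay properly labelled. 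So the extra data in a pointed bimodule over a bimodule is precisely the operations in $(\hat{\mathcal{L}}_m)_{\cdot + \cdot}$ that are \emph{not} in $(\hat{\mathcal{L}}_m)_{\cdot \circ \cdot}$, modulo the relations generated by those that are. I would first identify the minimal such "bad" operation and show it corresponds exactly to a map $1 \to \mathcal{C}_{m-1}$, in analogy with Remark \ref{remarkcasetwo}, where for $m=2$ the bad operation is a unary $C$-vertex and amounts to a map $1 \to \mathcal{C}_1 = \mathcal{C}_{m-1}$.

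The key steps, in order. First, characterise when an operation $x$ of $\hat{\mathcal{L}}_m$ with properly labelled underlying graph $G_x$ admits an insertion of $m$ copies of a new integer labelled $B$ keeping proper labelledness: inserting such an integer adds a new vertex $b$ to $G_x$ with label $B$, and proper labelledness of the enlarged graph requires only that for every edge into $b$ from a vertex $v$, either $v$ has label $A$ — which is automatic once $b$ has label $B$, since the constraint "$v$ has label $A$ or $b$ has label $B$" is satisfied. Thus the obstruction is combinatorial: one must be able to find a placement of the new integer creating complexity-$m$ relations with the existing integers in a way compatible with the string structure of $\hat{\mathcal{L}}_m$. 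Second, using the fact that $\hat{\mathcal{L}}_m$ is closed under joins and that its nullary operation has $m-1$ bars, show that the smallest operation failing this insertion test is the $C$-coloured nullary operation itself, i.e. the string with $m-1$ vertical bars labelled $C$; algebraically this contributes a single generating operation $1 \to \mathcal{C}(m-1) = \mathcal{C}_{m-1}$. Third, show that every other operation of $(\hat{\mathcal{L}}_m)_{\cdot + \cdot}$ either already lies in $(\hat{\mathcal{L}}_m)_{\cdot \circ \cdot}$ or is obtained, via the operad multiplication, from operations of $(\hat{\mathcal{L}}_m)_{\cdot \circ \cdot}$ together with this one generator — i.e. $(\hat{\mathcal{L}}_m)_{\cdot + \cdot}$ is generated over $(\hat{\mathcal{L}}_m)_{\cdot \circ \cdot}$ by the extra nullary operation. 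Fourth, check the relations: the associativity and unit axioms that the new generator must satisfy are exactly the bimodule compatibilities plus the assertion that $1 \to \mathcal{C}_{m-1}$ is a map of objects with no further constraint, because the only operations that can be composed "below" or "around" a nullary $C$-operation are the bimodule structure maps.

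The step where the hypothesis $m \geq 3$ enters, and the main obstacle, is the third step — showing the extra generator is nullary and unique. For $m=2$ the relevant insertion adds a unary vertex, and there is a genuine family of bad operations (all unary $C$-vertices), which is why for $m=2$ a pointed bimodule carries a map $1 \to \mathcal{C}_1$ but the statement of the lemma is phrased for $m\geq 3$: when $m\geq 3$ we have $\lfloor m/2\rfloor \geq 1$ and $\lfloor (m-1)/2\rfloor \geq 1$, so inserting $m$ new integers forces enough structure that the only way an operation can be "$B$-blocked" at the $C$-part is if its $C$-labelled portion has complexity exactly $m-1$ and no room to grow — and a careful string/graph argument shows this pins the $C$-part down to the nullary operation up to bimodule multiplication. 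The combinatorial lemma that "insertion is always possible unless the $C$-part is already maximal" will require tracking how inserting $m$ copies of an integer into a join-decomposable string interacts with the alternating-strip structure of Definition \ref{definitiongrafting}; I expect this to be the technical heart, and I would handle it by induction on the number of vertices of $G_x$ with label $C$, peeling off one $C$-vertex at a time exactly as in the proof of Lemma \ref{lemmacategoryofproperlabellingsgraphnocycles}, reducing to the base case of a single $C$-vertex where one checks directly that insertion fails only for the nullary string. Once this is established, translating back to algebras gives the claimed equivalence of structures.
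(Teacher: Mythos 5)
Your overall strategy is the right one and is the same as the paper's: reduce the lemma to the statement that $(\hat{\mathcal{L}}_m)_{\cdot+\cdot}$ is freely generated over $(\hat{\mathcal{L}}_m)_{\cdot\circ\cdot}$ by the single nullary operation given by the string with $m-1$ vertical bars and target label $C$, which contributes exactly the map $1\to\mathcal{C}_{m-1}$. However, the execution has two genuine gaps. First, your analysis of the insertion obstruction is wrong: you only examine edges \emph{into} the new $B$-labelled vertex $b$ and conclude the proper-labelling condition is ``automatic''. The actual constraints come from the edges \emph{out of} $b$ (an edge $b\to w$ forces $w$ to be labelled $B$, since $b$ is not labelled $A$), from the cospan condition on $\mathcal{P}_{\cdot\to\cdot\leftarrow\cdot}$, and from the requirement that the underlying unlabelled string remain an operation of $\hat{\mathcal{L}}_m$. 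Since the whole difference between $_{\cdot+\cdot}$ and $_{\cdot\circ\cdot}$ is encoded in this obstruction, misidentifying it undermines everything downstream.

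Second, the step you yourself flag as ``the technical heart'' --- that every operation of $(\hat{\mathcal{L}}_m)_{\cdot+\cdot}$ factors \emph{uniquely} as an operation of $(\hat{\mathcal{L}}_m)_{\cdot\circ\cdot}$ composed with copies of the nullary generator --- is left unproved, and the induction you propose is set up on the wrong parameter. The paper inducts on the join-decomposition of the operation (the way it is built from identities and the nullary operation via joins), and the base case is not trivial: for $m=3$ the identity $1|1|1|1$ with source labelled $A$ and target $C$ has no $C$-labelled source at all, is \emph{not} in $(\hat{\mathcal{L}}_3)_{\cdot\circ\cdot}$, and its unique completion is the explicitly constructed string $12|213|314|41432$. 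This contradicts your base-case claim that ``insertion fails only for the nullary string'' and shows that peeling off $C$-vertices as in Lemma \ref{lemmacategoryofproperlabellingsgraphnocycles} does not reach the relevant base cases. Note also that the paper only carries out this argument for $m=3$ (the only case needed for Theorem \ref{theorem}); your proposal asserts the general case $m\geq 3$ without supplying the combinatorics that would justify it.
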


\begin{proof}
	If $m=2$, the result is obvious from Remark \ref{remarkcasetwo}. The argument is very similar for the case $m=1$. We will prove the case $m=3$. We need to prove that the coloured operad $(\hat{\mathcal{L}}_3)_{\cdot + \cdot}$ is the same as the coloured operad freely generated by adding the string with $2$ bars and target label $C$ to $(\hat{\mathcal{L}}_3)_{\cdot \circ \cdot}$. Since $(\hat{\mathcal{L}}_3)_{\cdot + \cdot}$ contains the string with $2$ bars and target label $C$ and also $(\hat{\mathcal{L}}_3)_{\cdot \circ \cdot}$ as a suboperad, the inclusion in the first direction is obvious. For the other direction, we need to prove that for any operation of $(\hat{\mathcal{L}}_3)_{\cdot + \cdot}$ given by a string $x$, there is a unique operation of $(\hat{\mathcal{L}}_3)_{\cdot \circ \cdot}$ given by a string $x'$, obtained from $x$ by adding several times $3$ occurrences of an extra integer labelled with $C$. Let us first assume $x$ is an identity. If the unique source of $x$ is labelled with $C$, then $x'$ is given by $x$. If it is labelled with $A$, we need to add $3$ occurrences of an extra integer labelled with $C$ as many times as $x$ has vertical bars. For example, if $x = 1|1|1|1$, then
	\[
		x' = 12|213|314|41432.
	\]
	Similarly, if the unique source is labelled with $B$, then
	\[
		x' = 23414|413|312|21.
	\]
	In both cases, $x'$ corresponds to an operation of $(\hat{\mathcal{L}}_3)_{\cdot \circ \cdot}$ because we can not add $3$ occurrences of an extra integer anymore. By induction, if an operation of $(\hat{\mathcal{L}}_3)_{\cdot + \cdot}$ is given by a string $z$ obtained as the join of two operations $x$ and $y$, then $z'$ will be given by the join of $x'$ and $y'$. Indeed, if it was possible to add $3$ occurrences of an extra integer in $z'$, it would also be possible in $x'$ or $y'$, which is a contradiction.
%
%
\end{proof}


Let $\mathrm{Operad}(m)$ be the category of operads of complexity $m$ and $\zeta$ be the terminal object in this category. $\mathcal{O} \in \mathrm{Operad}(m)$ is called \emph{multiplicative} when it is equipped with a map $\zeta \to \mathcal{O}$. Let $\mathrm{Bimod}(m)$ be the category of $\zeta-\zeta$-bimodules.

\begin{theorem}\label{theorem}
	Let $\mathcal{O}$ be a multiplicative operad of complexity $m$. If $m \leq 3$, there is a fibration sequence
	\[
		\Omega \mathrm{Map}_{\mathrm{Operad}(m)} (\zeta,u^*(\mathcal{O})) \to \mathrm{Map}_{\mathrm{Bimod}(m)} (\zeta,v^*(\mathcal{O})) \to \mathcal{O}(m-1),
	\]
	where $u^*$ and $v^*$ are the appropriate forgetful functors.
\end{theorem}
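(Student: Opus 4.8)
The plan is to follow the strategy of Turchin, Dwyer--Hess and especially \cite{batanindeleger}, building the fibration sequence from the cofinality theorem (Theorem \ref{theoremcofinality}) together with the two lemmas just established. First I would set up the relevant model-categorical framework: by Lemma \ref{lemmaleftproper} the category $\mathrm{Operad}(m)$ is left proper, and the same argument (replacing $\hat{\mathcal{L}}_m$ by $(\hat{\mathcal{L}}_m)_{\cdot\circ\cdot}$, whose tameness follows from the same analysis of adjacent vertices in properly labelled underlying graphs) shows $\mathrm{Bimod}(m)$ is left proper as well. The forgetful functors $u^*:\mathrm{Operad}(m)\to\mathrm{Bimod}(m)$ and $v^*:\mathrm{Bimod}(m)\to\mathrm{Coll}$ (collections, i.e.\ $\mathbb{N}$-indexed objects of $\mathcal{E}$, which is where the last term $\mathcal{O}(m-1)$ lives, a single colour being evaluated) are right adjoints induced by inclusions of suboperads, hence are right Quillen; the cofinality of these inclusions, i.e.\ that the induced maps of internal algebra classifiers have contractible nerve, is exactly what Theorem \ref{theoremcofinality} provides.

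Next I would show that homotopical cofinality of a map of polynomial monads yields a homotopy pullback square of derived mapping spaces. Concretely, for $\mathcal{O}$ a multiplicative operad of complexity $m$ one gets, by the formalism of internal algebra classifiers, a weak equivalence
\[
	\mathrm{Map}_{\mathrm{Operad}(m)}(\zeta, u^*\mathcal{O}) \simeq \mathrm{holim} \, \bigl( \text{diagram over } \mathbb{C}(G)\text{-type categories indexed by operations of } \hat{\mathcal{L}}_m \bigr),
\]
and similarly for $\mathrm{Bimod}(m)$; cofinality of the inclusion $(\hat{\mathcal{L}}_m)_{\cdot+\cdot}\hookrightarrow (\hat{\mathcal{L}}_m)_{\cdot\to\cdot\leftarrow\cdot}$ identifies the correction term between the two homotopy limits. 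The upshot, exactly as in \cite{turchin,dwyerhess,batanindeleger}, is a homotopy fibration sequence comparing derived maps out of the terminal operad versus out of the terminal bimodule, with fibre the space of the extra point-structure. This is where Lemma \ref{lemmapointedbimodules} enters: for $m\le 3$ a pointed $\zeta$-$\zeta$-bimodule is precisely a $\zeta$-$\zeta$-bimodule $\mathcal{C}$ together with a map $e\to\mathcal{C}_{m-1}$, so the forgetful functor from pointed bimodules to bimodules is a left fibration whose fibre over $\zeta$ is $\mathcal{O}(m-1)$ (the value of the ambient algebra $\mathcal{O}$ on the colour with $m-1$ bars). Therefore the fibre of $\mathrm{Map}_{\mathrm{Bimod}(m)}(\zeta,v^*\mathcal{O})\to \mathcal{O}(m-1)$ is the space of derived maps of \emph{pointed} bimodules, and the cofinality comparison identifies that with $\Omega\,\mathrm{Map}_{\mathrm{Operad}(m)}(\zeta,u^*\mathcal{O})$, the loop coming from the fact that the terminal operad, regarded as a pointed bimodule over itself, has a one-dimensional-higher cell structure relative to the terminal bimodule (the unit $\eta:e\to\mathcal{A}(m-1)$ of an operad of complexity $m$ being the data that is ``free'' in passing from bimodules to operads).

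To assemble the sequence I would: (1) invoke Lemma \ref{lemmaleftproper} and its bimodule analogue to legitimise the derived mapping spaces and their computation via $\holim$ over internal algebra classifiers; (2) apply Theorem \ref{theoremcofinality} with $\mathcal{P}=\hat{\mathcal{L}}_m$ and with $\mathcal{P}=(\hat{\mathcal{L}}_m)_{\cdot\circ\cdot}$ to replace the classifiers $\mathcal{P}_{\cdot\to\cdot\leftarrow\cdot}^{\mathcal{P}_{\cdot+\cdot}}$ by $\mathcal{P}^{\mathcal{P}}$ up to weak equivalence, producing a homotopy pullback relating $\mathrm{Map}_{\mathrm{Operad}(m)}$, $\mathrm{Map}_{\mathrm{Bimod}(m)}$ and the space of pointed-bimodule maps; (3) use Lemma \ref{lemmapointedbimodules} to split off the basepoint datum, identifying the base of the fibration as $\mathcal{O}(m-1)$ and the total space as $\mathrm{Map}_{\mathrm{Bimod}(m)}(\zeta,v^*\mathcal{O})$; (4) identify the fibre: the space of pointed $\zeta$-$\zeta$-bimodule maps $\zeta\to v^*\mathcal{O}$ sitting over a \emph{fixed} point of $\mathcal{O}(m-1)$ is, by a second application of cofinality, a loop space on $\mathrm{Map}_{\mathrm{Operad}(m)}(\zeta,u^*\mathcal{O})$. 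The main obstacle is step (4): one must check that the cell attaching the operad structure to the pointed-bimodule structure is genuinely a \emph{single} cell (so the fibre is a single loop space and not something more complicated), and it is precisely this point that forces the restriction $m\le 3$ — for $m\ge 4$ the combinatorics of the underlying graphs of operations of $\hat{\mathcal{L}}_m$ no longer collapse in this way, as noted in the introduction. For $m\le 3$ this is guaranteed by Lemma \ref{lemmapointedbimodules}, which says exactly that the only extra datum is the single map $e\to\mathcal{O}(m-1)$, and specializing to $m=2$ recovers \eqref{equationfirstfibrationsequence} while $m=1$ recovers the variant of \eqref{equationsecondfibrationsequence} mentioned in the introduction.
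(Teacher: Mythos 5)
Your proposal has the right three ingredients (left properness, the cofinality theorem, and Lemma \ref{lemmapointedbimodules}) and correctly identifies the base of the fibration as $\mathcal{O}(m-1)$ via the pointed-vs-unpointed bimodule comparison, which is indeed how the paper produces the sequence $\mathrm{Map}_{\mathrm{PBimod}(m)}(\zeta,-)\to\mathrm{Map}_{\mathrm{Bimod}(m)}(\zeta,-)\to\mathcal{O}(m-1)$. But there is a genuine gap at your step (4), the identification of the fibre with the loop space. You assert that the space of pointed-bimodule maps is $\Omega\,\mathrm{Map}_{\mathrm{Operad}(m)}(\zeta,u^*\mathcal{O})$ ``by a second application of cofinality,'' with the loop ``coming from a one-dimensional-higher cell structure.'' Cofinality alone cannot produce a loop space: it only compares two mapping spaces up to weak equivalence. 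The missing ingredient is the delooping theorem: one first identifies $\Omega\,\mathrm{Map}_{\mathrm{Operad}(m)}(\zeta,u^*\mathcal{O})$ with $\mathrm{Map}_{S^0/\mathrm{Operad}(m)}(\zeta,g^*\mathcal{O})$, where $S^0=\zeta\amalg\zeta$ and one works in the under-category of doubly-pointed operads. This is where Lemma \ref{lemmaleftproper} is actually used (left properness is the hypothesis of the delooping theorem, not a device to ``legitimise'' mapping spaces or their computation by homotopy limits). Only after this delooping does the cofinality theorem enter, to show that the forgetful functor $h^*:S^0/\mathrm{Operad}(m)\to\mathrm{PBimod}(m)$ has a left cofinal left Quillen adjoint, whence $\mathrm{Map}_{S^0/\mathrm{Operad}(m)}(\zeta,-)\simeq\mathrm{Map}_{\mathrm{PBimod}(m)}(\zeta,-)$; this comparison is extracted from Theorem \ref{theoremcofinality} with $\mathcal{P}=\hat{\mathcal{L}}_m$ by passing through Grothendieck constructions over the category of pairs $(\mathcal{A},\mathcal{B})$, not by applying the theorem directly to $u^*$ or $v^*$ as you suggest.

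A secondary inaccuracy: you locate the restriction $m\le 3$ in the loop-space identification of the fibre. In fact the cofinality and delooping steps work for all $m$; the restriction enters only through Lemma \ref{lemmapointedbimodules}, i.e.\ in splitting the basepoint datum off a pointed bimodule so that evaluation at the basepoint gives the fibration onto $\mathcal{O}(m-1)$. Also, $v^*$ is the forgetful functor from (multiplicative operads containing) $\mathcal{O}$ to $\zeta$-$\zeta$-bimodules, not a functor from $\mathrm{Bimod}(m)$ to collections.
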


\begin{proof}
	According to Lemma \ref{lemmaleftproper}, $\mathrm{Operad}(m)$ is left proper. Therefore, using \cite[Theorem 4.5]{deleger}, we have the delooping
	\begin{equation}\label{equation1}
		\Omega \mathrm{Map}_{\mathrm{Operad}(m)} (\zeta,u^*(\mathcal{O})) \to \mathrm{Map}_{S^0 / \mathrm{Operad}(m)} (\zeta,g^*(\mathcal{O})),
	\end{equation}
	where $S^0 := \zeta \amalg \zeta$ and $g^*$ is induce by the unique map $S^0 \to \zeta$.
	
	Let $\mathrm{PBimod}(m)$ be the category of pointed $\zeta-\zeta$-bimodules. There is a forgetful functor $h^*: S^0 / \mathrm{Operad}(m) \to \mathrm{PBimod}(m)$. This forgetful functor has a left adjoint $h_!$ forming a Quillen adjunction. This Quillen adjunction is actually induced by a map of coloured operads. We want to prove that $h_!$ is a \emph{left cofinal Quillen functor} \cite[Definition 4.7]{deleger}. Let $\mathbb{B}$ be the category of pairs of operads of complexity $m$. Let $\Phi: \mathbb{B}^{op} \to \mathrm{CAT}$ and $\Psi: \mathbb{B}^{op} \to \mathrm{CAT}$ be the functors which sends a pair $(\mathcal{A},\mathcal{B})$ to the category of pointed $\mathcal{A}-\mathcal{B}$ bimodules and of cospans $\mathcal{A} \to \mathcal{C} \leftarrow \mathcal{B}$ of operads of complexity $m$, respectively. There is a Quillen adjunction between the Grothendieck constructions over these functors. This Quillen adjunction is induced by the operad map of Theorem \ref{theoremcofinality}, when we take $\mathcal{P} = \hat{\mathcal{L}}_m$. According to \cite[Remark 4.8]{deleger}, the left adjoint of this adjunction is a left cofinal Quillen functor. We deduce from \cite[Theorem 3.33]{bwdgrothendieck} that $h_!$ is also a left cofinal Quillen functor. Therefore, according to \cite[Lemma 4.9]{deleger}, there is a weak equivalence
	\begin{equation}\label{equation2}
		\mathrm{Map}_{S^0 / \mathrm{Operad}(m)} (\zeta,h^*(\mathcal{O})) \to \mathrm{Map}_{S^0 / \mathrm{PBimod}(m)} (\zeta,h^*g^*(\mathcal{O})).
	\end{equation}
	
	Finally, we deduce from Lemma \ref{lemmapointedbimodules} and \cite[Theorem 4.13]{deleger} that there is a fibration sequence
	\begin{equation}\label{equation3}
		\mathrm{Map}_{\mathrm{PBimod}(m)} (\zeta,h^*g^*(\mathcal{O})) \to \mathrm{Map}_{\mathrm{Bimod}(m)} (\zeta,v^*(\mathcal{O})) \to \mathcal{O}(m-1).
	\end{equation}
	We get the desired result by combining \ref{equation1}, \ref{equation2} and \ref{equation3}.
\end{proof}

%
%
%
%
%
%
%

\bibliographystyle{plain}
\bibliography{latticepath}

\end{document}